\newtheorem{thm}{Theorem}[section]
\newtheorem{lem}[thm]{Lemma}
\theoremstyle{definition}
\newtheorem{rem}[thm]{Remark}
\newcommand{\Ze}{\mathbb Z}
\renewcommand{\Re}{\mathbb R}
\newcommand{\Ren}{\Re^n}
\newcommand{\Renp}{\Re^{n+1}}
\newcommand{\Ben}{{\mathbf B}^n}
\newcommand{\Benp}{{\mathbf B}^{n+1}}
\newcommand{\Se}{\mathbb S}
\newcommand{\Sen}{\Se^{n-1}}
\newcommand{\Senp}{\Se^{n}}
\newcommand{\Omegan}{\Omega_{n-1}}
\newcommand{\Omeganp}{\Omega_{n}}
\renewcommand{\epsilon}{\varepsilon}
\newcommand{\Pe}{\mathbb P}
\newcommand{\st}{\; : \; }
\renewcommand{\phi}{\varphi}
\DeclareMathOperator{\inter}{int}
\DeclareMathOperator{\bd}{bd}
\DeclareMathOperator{\conv}{conv}
\newcommand{\card}[1]{\left|#1\right|}
\newcommand{\Binom}{\mathtt{Binom}}
\newcommand\fact[2][]{
  \ifx&#1& 
  \refstepcounter{equation}
  \fi
  \begin{minipage}{0.09\textwidth}
  \ifx&#1& 
  (\theequation)
  \else 
  {#1}
  \fi
  \end{minipage}
  \hfill
  \begin{minipage}{0.81\textwidth}
  \emph{
  \begin{sloppypar}
  #2
  \end{sloppypar}
  }
  \end{minipage}
  \smallskip
}
\title{A Spiky Ball}
\author[M. Nasz\'odi]{M\'arton Nasz\'odi}
\address{
ELTE, 
Dept. of Geometry,
Lorand E\"otv\"os University,
P\'azm\'any P\'eter S\'et\'any 1/C
Budapest, Hungary 1117
}
\email{marton.naszodi@math.elte.hu}
\keywords{covering, illumination, Rogers' bound, spherical cap}
\subjclass[2010]{52C17, 52A22}
\thanks{The author acknowledges the support of the J\'anos Bolyai 
Research Scholarship of the Hungarian Academy of Sciences, and
the Hung. Nat. Sci. Found. (OTKA) grant PD104744.}
\begin{document}
\begin{abstract}
The Illumination Problem may be phrased as the problem of covering a convex 
body in Euclidean $n$-space by a minimum number of translates of its interior. 
By a probabilistic argument, we show that, arbitrarily close to the Euclidean 
ball, there is a centrally symmetric convex body of illumination number 
exponentially large in the dimension.
\end{abstract}

\maketitle

\section{Introduction}

For two sets $K$ and $L$ in $\Ren$, let $N(K,L)$ denote the 
\emph{translative covering number} of $K$ by $L$, that is, the
minimum number of translates of $L$ that cover $K$.

Let $K$ be a convex body (that is, a compact, convex set with non--empty 
interior) in $\Ren$. 
Following Hadwiger \cite{Ha60}, we say that a point $p\in\Ren\setminus K$ 
\emph{illuminates} a boundary point 
$b\in\bd K$, if the ray $\{p+\lambda (b-p)\st \lambda>0\}$
emanating from $p$ and passing through $b$ intersects the interior of $K$.
Boltyanski \cite{Bo60} gave the following slightly different definition. A 
direction $u\in\Se^{n-1}$ is said to 
\emph{illuminate} $K$ at a boundary point $b\in\bd K$ if the ray $\{b+\lambda 
u\st \lambda>0\}$ intersects the interior of $K$.
It is easy to see that the minimum number of directions 
that illuminate each boundary point of $K$ is equal to the minimum number of 
points that illuminate each boundary point of $K$. This number is called the 
\emph{illumination number} ${\rm i}(K)$ of $K$. 

We call a set of the form $\lambda K+v$ a \emph{smaller positive homothet of} 
$K$ if $0<\lambda<1$ and $v\in\Ren$.
Gohberg and Markus asked how large the minimum number of smaller positive 
homothets of $K$ covering $K$ can be. It is not hard to see that this number is 
equal to $N(K,\inter K)$. It is also easy to see that ${\rm i}(K)=N(K,\inter 
K)$.

Any smooth convex body (ie., a convex body with a unique support hyperplane at 
each boundary point) in $\Ren$ is illuminated by $n+1$ 
directions. Indeed, for a smooth body, the set of directions illuminating a 
given boundary point is an open hemisphere of $\Se^{n-1}$, and one can find 
$n+1$ points (eg., take the vertices of a regular simplex) in $\Se^{n-1}$ with 
the property that every open hemisphere contains at least one of the points. 
Thus, these $n+1$ points in $\Se^{n-1}$ (ie., directions) illuminate any smooth 
convex body in $\Ren$ (cf. \cite{BoMaSo97} for details).

On the other hand, the illumination number of the cube is $2^n$, since no two 
vertices of the cube share an illumination direction. Even though we do not 
discuss it, it would be difficult to omit mentioning the 
\emph{Gohberg--Markus--Levi--Boltyanski--Hadwiger Conjecture} (or, Illumination 
Conjecture), according to which for any convex body $K$ in $\Ren$, we have ${\rm 
i}(K)=2^n$, where equality is attained only when $K$ is an affine image of the 
cube.

For more background on the problem of illumination, see \cites{Be06, 
Be10, BMP05, MS99}. In Chapter VI. of \cite{BoMaSo97}, one can find a proof of 
the equivalence of the four definitions of ${\rm i}(K)$ given above.

The Euclidean ball is a smooth convex body, and hence, is of illumination 
number $n+1$. Theorem~\ref{thm:spiky} shows that, arbitrarily close to the 
Euclidean ball, there is a convex body of much larger illumination number.

We denote the closed Euclidean unit ball in $\Ren$ centered at the origin $o$ 
by $\Ben$, and its boundary, the sphere by $\Sen$.

\begin{thm}\label{thm:spiky}
 Let $1<D<1.116$ be given. Then for any sufficiently large dimension $n$, 
there is an $o$-symmetric convex body $K$ in $\Ren$, with illumination number 
\begin{equation}\label{eq:iBiggerD}
{\rm i}(K)=N(K,\inter K)\geq .05 D^n,
\end{equation}
for which 
\begin{equation}\label{eq:BKBcontainment}
\frac{1}{D}\Ben\subset K \subset \Ben.
\end{equation}
\end{thm}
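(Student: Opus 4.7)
I would construct $K$ as a random spiky ball: take $N$ independent uniform random points $X_1, \ldots, X_N$ on $\Sen$, with $N$ a large constant multiple of $nD^n$ to be specified, and set
\[K := \conv\bigl(\tfrac{1}{D}\Ben \cup \{\pm X_1, \ldots, \pm X_N\}\bigr).\]
Central symmetry and the containment \eqref{eq:BKBcontainment} hold automatically since each $\pm X_i \in \Sen$. So everything reduces to proving that, with positive probability, ${\rm i}(K) \geq 0.05\, D^n$.

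The geometric key is to make each spike $X_i$ a sharp vertex. The tangent cone from $X_i$ to $\tfrac{1}{D}\Ben$ is the circular cone of half-angle $\alpha := \arcsin(1/D)$ about the axis $-X_i$. A direct computation comparing the direction $(X_j - X_i)/|X_j - X_i|$ (and its analogue for $-X_j$) to this cone yields that no other spike adds a new extreme ray iff $|\langle X_i, X_j\rangle| \leq 2/D^2 - 1$. Under the event $E_1$ that this separation holds for every pair $i\neq j$, the tangent cone $T_K(X_i)$ at each spike is exactly the above ball cone, so a direction $u \in \Sen$ illuminates $X_i$ iff $X_i$ lies in the open spherical cap $C(-u, \alpha)$ of angular radius $\alpha$ about $-u$.

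Next, $\Pr[E_1] \to 1$. The cap of radius $\arccos(2/D^2 - 1)$ on $\Sen$ has normalized measure of order $(2\sqrt{D^2-1}/D^2)^{n-2}/\sqrt n$, so a union bound over the $\binom{N}{2}$ pairs gives
\[\Pr[E_1^c] \lesssim n^{3/2}\,(2\sqrt{D^2-1})^{n-2},\]
which vanishes since the hypothesis $D < 1.116 < \sqrt{5/4}$ forces $2\sqrt{D^2-1} < 1$.

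Finally I would show, via an $\epsilon$-net on $\Sen$ plus a union bound, that with positive probability no set $V \subset \Sen$ of size at most $T := \lfloor 0.05\, D^n \rfloor$ illuminates $K$. Under $E_1$, if $V$ illuminates $K$ then $\{X_i\} \subseteq A_V := \bigcup_{v \in V} C(-v, \alpha)$. Replacing each $v \in V$ by its nearest point in an $\epsilon$-net $\mathcal N \subset \Sen$ of size $\leq (3/\epsilon)^{n-1}$ enlarges each cap only to radius $\alpha + \epsilon$, so it suffices to bound the event $\{X_i\} \subset A_{V'}$ over net subsets $V' \subset \mathcal N$ of size $\leq T$. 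For fixed such $V'$, by independence of the $X_i$, one has $\Pr[\{X_i\} \subset A_{V'}] \leq (T\sigma_{n-1}(\alpha+\epsilon))^N$; since $T\sigma_{n-1}(\alpha) \sim D^2/\sqrt n \to 0$, the choice $\epsilon \asymp 1/n$ keeps $T\sigma_{n-1}(\alpha+\epsilon) = o(1)$, and the total bound $(3/\epsilon)^{(n-1)T}(T\sigma_{n-1}(\alpha+\epsilon))^N$ drops below $1/2$ once $N$ exceeds a sufficiently large constant multiple of $nT$. The main obstacle lies in the coupling of the two events: the separation constraint $2\sqrt{D^2-1} < 1$, equivalent to $D < \sqrt{5/4} \approx 1.118$, is essentially tight, and the stated bound $D < 1.116$ leaves only a small numerical cushion for the polynomial factors in the estimates.
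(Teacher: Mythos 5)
Your construction, the separation event $E_1$, and the geometric characterization of the illuminating directions at a spike as a cap $C(-u,\alpha)$ with $\sin\alpha=1/D$ are exactly the paper's (the paper works in $\Re^{n+1}$ and phrases the separation condition via angles rather than inner products, but the threshold $|\langle X_i,X_j\rangle|\le 2/D^2-1$ is the same, and both arguments hit the same barrier $D<\sqrt{5}/2$). Where you diverge is in the second, concentration step. The paper introduces a scalar parameter $T=N\Theta p$, covers $\Se^{n}$ by a $\delta$-net $\Lambda$, and shows via a Chernoff bound that with high probability \emph{no single} cap $C(v,\alpha+\delta)$, $v\in\Lambda$, contains more than $T$ of the $2N$ spikes; since any one illuminating direction can then cover at most $T$ spikes, this gives ${\rm i}(K)\ge 2N/T=2/(\Theta p)$ deterministically, a bound that does not depend on $N$ and uses only a singly exponential union bound over $\Lambda$. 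You instead union-bound directly over all $\le T$-element subsets $V'$ of the net and show that, for each fixed $V'$, the probability that every spike lands in $\bigcup_{v'\in V'}C(-v',\alpha+\epsilon)$ is at most $(T\sigma(\alpha+\epsilon))^N$. That is a doubly exponential union bound, and to beat it you need $N\gtrsim nT\asymp nD^n$ (whereas the paper is content with any $N$ satisfying its two bracketing inequalities). Your route avoids Chernoff entirely, which is a genuine simplification of the probabilistic tool used, at the cost of a larger (but still harmless) $N$. One small remark: in the last sentence, the constraint $D<\sqrt{5/4}$ comes solely from making $\Pr(E_1^c)$ small, not from any coupling between the two bad events; the two events are controlled independently and then combined by a union bound, in your argument as in the paper's. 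Aside from that, the plan is sound and matches the paper's quantitative conclusion, including the source of the upper limit on $D$.
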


We will use a probabilistic construction to find $K$. We are not aware of any 
lower bound for the Illumination Problem that was obtained by a probabilistic 
argument.

For a point $u\in\Sen$, and $0<\phi<\pi/2$, let $C(u,\phi)=\{v\in\Sen\st 
\sphericalangle(u,v)\leq\phi\}$ denote the spherical cap centered at $u$ of 
angular radius $\phi$. We denote the normalized Lebesgue measure (that is, the 
Haar probability measure on $\Sen$) of $C(u,\phi)$ by $\Omegan(\phi)$.

In Theorem~\ref{thm:almostballupper}, we give an upper bound for the 
illumination number for bodies close to the Euclidean ball. It follows from
\cite{BK09} but, for the sake of completeness, we will sketch a proof.

\begin{thm}\label{thm:almostballupper}
 Let $K$ be a convex body in $\Ren$ such that $\frac{1}{D}\Ben\subset K \subset 
\Ben$ for some $D>1$. Then the illumination number of $K$ is at most
\begin{equation}
{\rm i}(K)\leq
 \frac{n\ln n+n\ln\ln n+5n}{\Omegan(\alpha)},
\end{equation}
where $\alpha=\arcsin (1/D)$.
\end{thm}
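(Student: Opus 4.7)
\textbf{Plan for the proof of Theorem~\ref{thm:almostballupper}.}

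My plan is to reduce the illumination problem to a spherical covering problem and then invoke a Rogers-type covering bound on the sphere.

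First, I would analyze the set of directions illuminating a given boundary point. Fix $b\in\bd K$. Since $\frac{1}{D}\Ben\subset K$ and $K\subset\Ben$, we have $1/D\leq |b|\leq 1$, so the unit vector $v_b:=-b/|b|\in\Sen$ is well defined. The ball $\frac{1}{D}\Ben\subset \inter K$ subtends at $b$ a spherical cone whose axis is $v_b$ and whose half-angle $\beta$ satisfies $\sin\beta=(1/D)/|b|\geq 1/D$, hence $\beta\geq\alpha=\arcsin(1/D)$. Every direction $u\in C(v_b,\alpha)$ therefore makes the ray $\{b+\lambda u:\lambda>0\}$ meet $\frac{1}{D}\inter\Ben\subset\inter K$, and thus illuminates $b$. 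Moreover, as $b$ ranges over $\bd K$, the vector $v_b$ ranges over all of $\Sen$, because convexity and $o\in\inter K$ guarantee a boundary point on every ray from $o$.

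Consequently, a set $U\subset\Sen$ of directions illuminates $K$ provided that $U\cap C(v,\alpha)\neq\emptyset$ for every $v\in\Sen$; by symmetry of the cap relation, this is equivalent to $\bigcup_{u\in U} C(u,\alpha)=\Sen$. Hence ${\rm i}(K)$ is bounded above by the minimum number of spherical caps of angular radius $\alpha$ needed to cover $\Sen$.

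Second, I would invoke a covering result on the sphere. The Rogers-type bound of B\"or\"oczky--Wintsche (the source \cite{BK09} cited in the excerpt) asserts that $\Sen$ can be covered by at most $\frac{n\ln n+n\ln\ln n+5n}{\Omeganp(\alpha)}$ (up to the constant in the $n$-term, depending on formulation) spherical caps of angular radius $\alpha$, for any $\alpha$ bounded away from $0$ and $\pi$ and for $n$ large. Combining this with the previous paragraph yields the stated upper bound on ${\rm i}(K)$.

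The main obstacle in fleshing out this sketch is the spherical covering estimate itself. The elementary reduction of illumination to cap covering is straightforward once one observes that the inscribed ball $\frac{1}{D}\Ben$ controls the illuminating cone at every boundary point. What requires work (and what the excerpt explicitly outsources) is proving an efficient covering of $\Sen$ by small caps with the right $n\ln n$ leading behavior; this is exactly the content of the Rogers-type argument that one would need to sketch from \cite{BK09}, typically via a probabilistic (random) choice of cap centers followed by a greedy patching of the uncovered remainder.
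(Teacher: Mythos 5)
Your argument is essentially the paper's proof: observe that the inscribed ball $\frac{1}{D}\Ben$ guarantees that the directions illuminating a boundary point $b$ contain a cap of radius $\alpha$ centered at $-b/|b|$, note that these centers sweep out all of $\Sen$, reduce to covering $\Sen$ by caps of radius $\alpha$, and invoke a Rogers-type covering estimate; your version simply makes explicit the cone computation ($\sin\beta=(1/D)/|b|\geq 1/D$) that the paper leaves implicit. One small correction: the covering bound is taken from Rogers \cite{Ro63} (refined by B\"or\"oczky--Wintsche \cite{BW03}), not from \cite{BK09}, which the paper cites only as an alternative source for the theorem as a whole.
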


By combining Theorem~\ref{thm:almostballupper} with the estimate 
\eqref{eq:BWkicsi} on $\Omegan$, one can see that \eqref{eq:iBiggerD} is 
asymptotically sharp, that is, the base $D$ cannot be improved.

Next, we consider an application of Theorem~\ref{thm:spiky}.
Let $K$ be an origin-symmetric convex body in $\Ren$, and denote its gauge 
function by $\|\cdot\|_K$ (that is, $\|p\|_K=\inf\{\lambda>0\st p\in \lambda 
K\}$, for any $p\in\Ren$). 
We use ${\rm vert}\,P$ to denote the set of vertices of the polytope $P$.
The \emph{illumination parameter}, introduced by K. Bezdek \cite{Be06}, is 
defined as
\[
{\rm ill}(K) =
\inf\left\{\sum_{p \in {\rm vert}\,P}
\|p\|_K \mid P \text{ a polytope such that } {\rm vert}\,P \text{
illuminates } K\right\}.
\]

The \emph{vertex index} of $K$, introduced by K. Bezdek and Litvak \cite{BL07}, 
is
\[
{\rm vein}(K) = \inf\left\{\sum_{p \in {\rm vert}\,P} \|p\|_
K \mid P \text{ a polytope such that } K \subseteq P\right\}.
\]
Clearly, ${\rm ill}(K)\geq {\rm vein}(K)$ for any centrally symmetric body $K$, 
and they are equal for smooth bodies. It is shown in \cite{BL07} that ${\rm 
vein}(\Ben)$ is of order $n^{3/2}$ (see also \cite{GL12}). 

By \eqref{eq:BKBcontainment}, for the body $K$ constructed in 
Theorem~\ref{thm:spiky} we 
have that ${\rm vein}(K)$ is of order $n^{3/2}$, while ${\rm ill}(K)\geq {\rm 
i}(K)$ is exponentially large. 

Thus, as an application of Theorem~\ref{thm:spiky}, we obtain that ${\rm 
ill}(K)$ 
and ${\rm vein}(K)$ are very far from each other for some $K$.

\section{Preliminaries}\label{sec:prelim}

We will rely heavily on the following estimates of $\Omeganp$ by B\"or\"oczky 
and 
Wintsche \cite{BW03}.

\begin{lem}[{B\"or\"oczky -- Wintsche \cite{BW03}}]\label{lem:BWcapsize} Let 
$0<\phi<\pi/2$.
\begin{eqnarray}
 \Omeganp(\phi)&>& \frac{\sin^n\phi}{\sqrt{2\pi(n+1)}}, 
\label{eq:BWnagy}
 \\
 \Omeganp(\phi)&<& \frac{\sin^n\phi}{\sqrt{2\pi n}\cos\phi} 
,\;\;\;\;\mbox{ if } \phi\leq \arccos \frac{1}{\sqrt{n+1}},
\label{eq:BWkicsi}
\\
 \Omeganp(t\phi)&<& t^n\Omeganp(\phi),\;\;\;\;\mbox{ if } 
1<t<\frac{\pi}{2\phi}. 
\label{eq:BWtszer}
\end{eqnarray}
\end{lem}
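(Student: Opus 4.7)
The plan is to derive all three inequalities from the standard integral representation
\[
\Omeganp(\phi) = \frac{\omega_{n-1}}{\omega_n}\int_0^\phi \sin^{n-1}\theta\, d\theta,
\]
where $\omega_k$ denotes the surface area of $\Se^k$, so that $\omega_{n-1}/\omega_n = \Gamma((n+1)/2)/(\sqrt{\pi}\,\Gamma(n/2))$. The key auxiliary ingredient is a sharp pair of two-sided bounds for this Gamma ratio; Wendel's inequality applied with $x=n/2$ and $s=1/2$ yields
\[
\frac{n}{\sqrt{2\pi(n+1)}}\;\leq\;\frac{\omega_{n-1}}{\omega_n}\;\leq\;\sqrt{\frac{n}{2\pi}}.
\]

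For \eqref{eq:BWnagy} and \eqref{eq:BWkicsi}, I would sandwich $\sin^{n-1}\theta$ between two multiples of $\sin^{n-1}\theta \cos\theta$, an antiderivative of which is $\sin^n \theta/n$. Using $\cos\theta\leq 1$ on $[0,\phi]$,
\[
\int_0^\phi \sin^{n-1}\theta\, d\theta \;\geq\; \int_0^\phi \sin^{n-1}\theta\cos\theta\, d\theta \;=\; \frac{\sin^n\phi}{n},
\]
which, combined with the lower Gamma-ratio bound, produces \eqref{eq:BWnagy}. Symmetrically, since $\cos\theta\geq \cos\phi$ on $[0,\phi]$,
\[
\int_0^\phi \sin^{n-1}\theta\, d\theta \;\leq\; \frac{1}{\cos\phi}\int_0^\phi \sin^{n-1}\theta\cos\theta\, d\theta \;=\; \frac{\sin^n\phi}{n\cos\phi},
\]
which together with the upper Gamma-ratio bound gives \eqref{eq:BWkicsi}; the hypothesis $\phi\leq \arccos(1/\sqrt{n+1})$ keeps the factor $1/\cos\phi$ controlled so that the displayed upper bound does not collapse into a triviality.

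For \eqref{eq:BWtszer}, I would substitute $\theta = ts$ in the integral defining $\Omeganp(t\phi)$, obtaining $\int_0^{t\phi}\sin^{n-1}\theta\, d\theta = t\int_0^\phi \sin^{n-1}(ts)\, ds$. Since $x\mapsto (\sin x)/x$ is strictly decreasing on $(0,\pi/2)$, the hypothesis $1<t<\pi/(2\phi)$ forces $\sin(ts)\leq t\sin s$ on $(0,\phi]$, hence $\sin^{n-1}(ts)\leq t^{n-1}\sin^{n-1}s$. Integrating and multiplying both sides by $\omega_{n-1}/\omega_n$ then produces \eqref{eq:BWtszer} directly.

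The main obstacle is not the integration itself, which is elementary, but pinning down the precise constants $\sqrt{2\pi(n+1)}$ and $\sqrt{2\pi n}$ claimed in \eqref{eq:BWnagy} and \eqref{eq:BWkicsi}. Crude Stirling asymptotics give the right shape but not the exact constants; one really needs a non-asymptotic, Wendel-type estimate for $\Gamma((n+1)/2)/\Gamma(n/2)$, and one must keep track of the asymmetry between $n$ and $n+1$ so that it is absorbed into the Gamma-ratio step rather than leaking into the integration step.
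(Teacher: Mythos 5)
The paper does not prove Lemma~\ref{lem:BWcapsize}; it is quoted from B\"or\"oczky and Wintsche \cite{BW03} and used as a black box, so there is no in-text argument to compare against. Your derivation is correct and self-contained. The integral representation $\Omeganp(\phi)=\frac{\omega_{n-1}}{\omega_n}\int_0^\phi\sin^{n-1}\theta\,d\theta$ with $\omega_{n-1}/\omega_n=\Gamma(\frac{n+1}{2})/\bigl(\sqrt{\pi}\,\Gamma(\frac n2)\bigr)$ is standard, and Wendel's inequality applied at $x=n/2$, $s=1/2$ gives exactly the two-sided estimate
\[
\frac{n}{\sqrt{2\pi(n+1)}}\;\le\;\frac{\omega_{n-1}}{\omega_n}\;\le\;\sqrt{\frac{n}{2\pi}}
\]
you invoke; combined with the strict integral comparisons $\int_0^\phi\sin^{n-1}\theta\,d\theta>\frac{\sin^n\phi}{n}$ (from $\cos\theta<1$ on $(0,\phi]$) and $\int_0^\phi\sin^{n-1}\theta\,d\theta<\frac{\sin^n\phi}{n\cos\phi}$ (from $\cos\theta>\cos\phi$ on $[0,\phi)$), this yields \eqref{eq:BWnagy} and \eqref{eq:BWkicsi} with the strict inequalities claimed. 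For \eqref{eq:BWtszer}, the substitution $\theta=ts$ and the strict monotone decrease of $x\mapsto(\sin x)/x$ on $(0,\pi/2)$ indeed give $\sin^{n-1}(ts)<t^{n-1}\sin^{n-1}s$ under the hypothesis $1<t<\pi/(2\phi)$, and integrating finishes the claim. One small correction to your commentary: the hypothesis $\phi\le\arccos(1/\sqrt{n+1})$ in \eqref{eq:BWkicsi} plays no role in your argument, which is valid for every $\phi\in(0,\pi/2)$; it is not there to ``keep $1/\cos\phi$ controlled'' in your proof, but is simply carried over from the formulation in \cite{BW03}, where it is presumably tied to that paper's own derivation. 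You can drop it without loss.
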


The following is known as Jordan's inequality:
\begin{equation}\label{eq:jordan}
\frac{2x}{\pi}\leq\sin x,\;\;\mbox{ for }\;\;x\in[0,\pi/2].
\end{equation}

\section{Construction of a Spiky Ball}\label{sec:spikyball}

We work in $\Renp$ instead of $\Ren$ to obtain slightly simpler formulas.
We describe a probabilistic construction of $K\subset\Renp$ 
which is close to the Euclidean ball and has a large illumination number. 
We use the standard notation $[N]$ for the set $\{1,\ldots,N\}$, and $\card{A}$ 
denotes the cardinality of a set $A$.

Let $N$ be a fixed positive integer, to be given later. We pick $N$ points, 
$X_1,\ldots,X_N$ independently and uniformly on the Euclidean unit sphere 
$\Senp$ of $\Renp$. Let 
\begin{equation}
 K=\conv\left(\{\pm X_i\st i\in [N]\}\cup \frac{1}{D}\Benp\right).
\end{equation}
Clearly, $K$ is $o$-symmetric and $\frac{1}{D}\Benp\subset K\subset \Benp$. We 
need to bound the illumination number of $K$ from below.
Let $\frac{\pi}{4}<\alpha<\frac{\pi}{2}$ be such that 
$\sin\alpha=1/D$.

\begin{figure}[tb]
    \centering
    \includegraphics[width=0.5\textwidth]{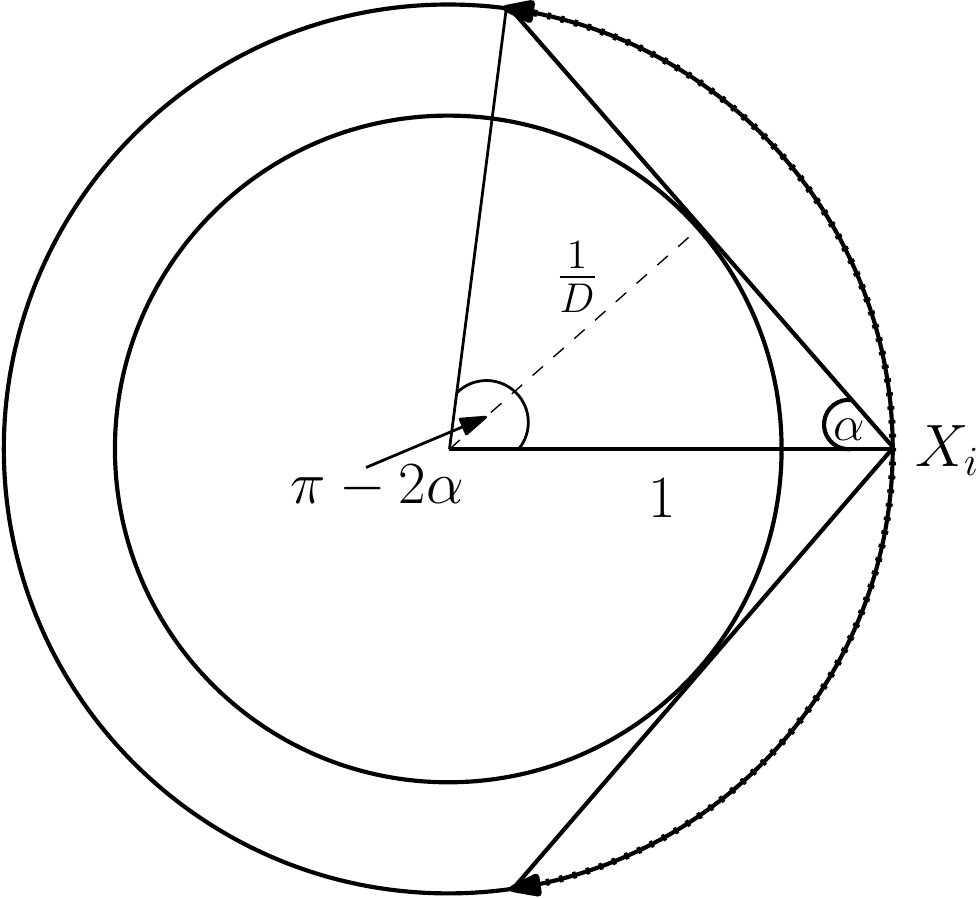}
    \caption[]{Event $E_1$: when $X_j$ falls on the dotted cap (the arc with 
arrows at its endpoints) or on its reflection about the origin.}
    \label{fig:bkp}
\end{figure}

We define two ``bad'' events, $E_1$ and $E_2$. Let $E_1$ be the event that 
there 
are $i\neq j \in [N]$ with $\sphericalangle(X_i,X_j)<\pi-2\alpha$ or 
$\sphericalangle(-X_i,X_j)<\pi-2\alpha$. We observe that if $E_1$ does not 
occur, then for all $i\in[N]$ we have

\fact{\label{fact:cap}
the set of directions (a subset of $\Senp$) that illuminate $K$ at $X_i$
is the spherical cap centered at $-X_i$ of spherical radius $\alpha$. 
}

We want to prove that, with non-zero probability, no point of $\Senp$ belongs 
to 
too many of these caps. Thus, to illuminate $K$ at each $X_i$, we will 
need many directions. 

Let $T\in\Ze^+$ be fixed, to be specified later.
Let $E_2$ be the event that there is a direction $u\in\Senp$ with 
$\card{C(u,\alpha)\cap \{\pm X_i\st i\in [N]\}}>T$.

Observe that if neither $E_1$ nor $E_2$ occur, then ${\rm i}(K)\geq 2N/T$.
However, it is difficult to bound the probability of $E_2$. Thus, we will 
replace $E_2$ by a ``more finite'' condition $E^{\prime}_2$ as follows.

We fix a $\delta>0$. We call a set $\Lambda\subset\Senp$ a \emph{$\delta$-net} 
(could also be called a metric $\delta$-net) if 
$\cup_{v\in\Lambda}C(v,\delta)=\Senp$, that is, if the caps of radius $\delta$ 
centered at the points of $\Lambda$ cover the sphere.
By \eqref{eq:BWnagy}, the measure of a cap of radius $\delta$ is larger than 
$\frac{\sin^n(\delta)}{3\sqrt{n}}$. Thus, Theorem~1 of \cite{Ro63} yields that, 
there is a covering of the sphere by at most $n^2/\sin^n(\delta)$ caps of 
radius $\delta$. That is, there is a $\delta$-net $\Lambda$ of size at most 
$\card{\Lambda}\leq n^2/ \sin^n(\delta)$.

Let $p=2\Omeganp(\alpha+\delta)$. Let $\Theta>1$ be fixed, and set 
$T=N\Theta p$.
We define the event $E^{\prime}_2$ as follows:
there is a direction $v\in\Lambda$ with $\card{C(v,\alpha+\delta)\cap \{\pm 
X_i\st i\in [N]\}}>N\Theta p$.
Clearly, if $E_2$ occurs, then so does $E^{\prime}_2$. Thus, we have
\begin{equation}\label{eq:implies}
 (\mbox{not}(E_1) \mbox{ and } \mbox{not}(E^{\prime}_2)) \mbox{ implies 
}{\rm i}(K)\geq 2/(\Theta p).
\end{equation}

Now, we need to set our parameters such that the event $(\mbox{not}(E_1) \mbox{ 
and } \mbox{not}(E^{\prime}_2))$ is of positive probability and $2/(\Theta p)$ 
is exponentially large in the dimension.

Clearly,
\begin{equation}\label{eq:pe1}
 \Pe(E_1)\leq N^2\Omeganp(\pi-2\alpha).
\end{equation}

Consider a fixed $v\in\Lambda$. When $X_i$ is picked randomly, the probability 
that $v$ is contained in $C(X_i,\alpha+\delta)$ or in $C(-X_i,\alpha+\delta)$ 
is $p$ (recall that $p=2\Omeganp(\alpha+\delta)$). Thus, 
the probability that $v$ is contained in more than 
$N\Theta p$ caps of the form $C(\pm X_i,\alpha+\delta)$ is $\Pe(\xi> 
 N\Theta p)$, where $\xi$ is a binomial random variable of distribution 
$\Binom(N,p)$. Thus,

\begin{equation}\label{eq:pe2}
 \Pe(E^{\prime}_2)\leq \frac{n^2}{\sin^n(\delta)}\Pe(\xi>N\Theta p) 
\;\;\mbox{ with } \xi\sim\Binom(N,p).
\end{equation}

By a Chernoff-type inequality, (cf. p. 64 of \cite{MU05}),
\begin{equation}\label{eq:cher}
\Pe(\xi>N\Theta p) <
2^{-N\Theta p},
\;\;\mbox{ for any }\;\;\Theta\geq 6.
\end{equation}

Consider the following three inequalities.
\begin{eqnarray}
 N&\leq& \left(\frac{1}{4\Omeganp(\pi-2\alpha)}\right)^{1/2},\label{eq:Nle}
\\
 \frac{n^2}{\sin^n\delta}2^{-\Theta Np}&\leq&\frac{1}{4},\label{eq:Nge}
\\
6&\leq& \Theta.\label{eq:thetap}
\end{eqnarray}

Combining \eqref{eq:implies}, \eqref{eq:pe1}, \eqref{eq:pe2} and 
\eqref{eq:cher}, we obtain the following.
If there are $N\in\Ze^+, \delta>0$ and $\Theta\geq 0$ (all depending on 
$n$) such that the three inequalities \eqref{eq:Nle},\eqref{eq:Nge} 
and \eqref{eq:thetap} 
hold, then there is a $K\subset\Renp$ $o$-symmetric convex 
body with ${\rm i}(K)\geq 2/(\Theta p)$, where $p=2\Omeganp(\alpha+\delta)$. In 
fact, in this case, our construction yields such a $K$ with probability at 
least $1/2$.

Now, \eqref{eq:Nge} holds if 
$\Theta N p>2n \log_2\frac{1}{\sin\delta}.$
Thus, an integer $N$ satisfying \eqref{eq:Nle} and \eqref{eq:Nge} exists if 
\begin{equation*}
 4n \log_2\frac{1}{\sin\delta} \leq 
 \Theta p \left(\frac{1}{4\Omeganp(\pi-2\alpha)}\right)^{1/2},
\end{equation*}
which we rewrite as
\begin{equation*}
 \frac{1}{\Theta p} \leq
 \frac{1}{8n (\Omeganp(\pi-2\alpha))^{1/2}\log_2\frac{1}{\sin\delta}}.
\end{equation*}

By \eqref{eq:jordan}, we can replace it
by the following stronger 
inequality:
\begin{equation}
 \label{eq:c11}
 \frac{1}{\Theta p} \leq
 \frac{1}{24n (\Omeganp(\pi-2\alpha))^{1/2}\log_2 (1/\delta)}.
\end{equation}

On the other hand, by substituting the value of $p$, we see that 
\eqref{eq:thetap} is equivalent to
\begin{equation}
 \frac{1}{\Theta p} \leq
 \frac{1}{12\Omeganp(\alpha+\delta)}.
 \label{eq:c2}
\end{equation}

Finally, let $\delta=\frac{\alpha}{n}$. 

Since $1<D=\frac{1}{\sin \alpha}<1.116$, we have that $1.11<\alpha<\pi/2$, and 
thus, $\sin^2(\alpha+\delta)>\sin(\pi-2\alpha)$. Now, by 
Lemma~\ref{lem:BWcapsize}, \eqref{eq:c2} is a stronger inequality than 
\eqref{eq:c11}. Thus, so far we have that if we can satisfy \eqref{eq:c2}, then 
the proof is complete.

By \eqref{eq:BWtszer}, we have that \eqref{eq:c2} holds, if
\begin{equation}
 \label{eq:cc}
 \frac{1}{\Theta p} \leq
 \frac{1}{36\Omeganp(\alpha)}.
\end{equation}
By \eqref{eq:BWkicsi}, it holds for $\frac{1}{\Theta p}=\frac{1}{36}D^n$. Since 
 ${\rm i}(K)\geq 2/(\Theta p)$, this finishes the proof of 
Theorem~\ref{thm:spiky}.

\begin{rem}
 The body $K$ is not a polytope. However, the construction can easily be 
modified to obtain a polytope. One simply replaces the ball of radius $1/D$ by 
a sufficiently dense finite subset $A$ of this ball in the definition of $K$ as 
follows:
$
 K=\conv(\{\pm X_i\st i\in [N]\}\cup A).
$
\end{rem}

\begin{proof}[Proof of Theorem~\ref{thm:almostballupper}]
Since $\frac{1}{D}\Ben\subset K\subset \Ben$, it follows that for any boundary 
point $b$ of $K$, the set of directions (as a subset of $\Sen$) that 
illuminate $K$ at $b$ contains an open spherical cap of radius 
$\alpha=\arcsin(1/D)$. Thus, any subset $A$ of $\Sen$ that pierces each 
such cap illuminates $K$. However, finding such $A$ is equivalent to finding a 
covering 
of $\Sen$ by caps of radius $\alpha$. Such a covering of the desired 
size exists by \cite{Ro63} (see also \cite{BW03}).
\end{proof}

\section*{Acknowledgement}
I am grateful for the conversations with Károly Bezdek, Gábor Fejes 
Tóth and J\'anos Pach. I also thank the referee whose comments made the 
exposition more clear.

\bibliographystyle{alpha}
\bibliography{biblio}

\end{document}